\theoremstyle{plain} \newtheorem{theorem}{Theorem}[section]
\newtheorem{lemma}[theorem]{Lemma}
\newtheorem{prop}[theorem]{Proposition}
\theoremstyle{definition} \newtheorem{definition}[theorem]{Definition}
\theoremstyle{remark}
\newcommand{\R}{{\mathbb R}}
\newcommand{\C}{{\mathbb C}}
\newcommand{\Z}{{\mathbb Z}}
\newcommand{\N}{{\mathbb N}}
\newcommand{\T}{{\mathbb T}}
\newcommand{\cC}{{\mathcal C}}
\newcommand{\cF}{{\mathcal F}}
\newcommand{\cO}{{\mathcal O}}
\newcommand{\cI}{{\mathcal I}}
\newcommand{\fD}{{\mathfrak D}}
\newcommand{\ls}{\lesssim}
\newcommand{\la}{\langle}
\newcommand{\ra}{\rangle}
\newcommand{\eps}{\varepsilon}
\newcommand{\p}{\partial}
\newcommand{\wto}{\rightharpoonup}
\newcommand{\Emax}{{E_{\mathrm{max}}}}
\newcommand{\balpha}{{\boldsymbol\alpha}}
\newcommand{\cf}{{\boldsymbol{1}}}
\DeclareMathOperator{\supp}{supp}
\DeclareMathOperator{\diag}{diag}
\numberwithin{equation}{section}
\begin{document}
\title[Global well-posedness on rectangular tori]{Global Well-posedness of the Energy-Critical Defocusing NLS on rectangular tori in three dimensions}
\author[N.~Strunk]{Nils~Strunk}

\address{Universit\"at Bielefeld, Fakult\"at f\"ur Mathematik, Postfach 100131, 33501 Bielefeld, Germany}
\email{strunk@math.uni-bielefeld.de}

\begin{abstract}
  The energy-critical defocusing nonlinear Schr\"odinger equation on $3$-dimensional rectangular tori
  is considered. We prove that the global well-posedness result for the standard torus of
  Ionescu and Pausader extends to this class of manifolds, namely, for any initial data in $H^1$ the
  solution exists globally in time.
\end{abstract}

\subjclass[2010]{Primary 35Q55; Secondary 35B33}
\keywords{}

\maketitle

\section{Introduction}\label{sect:intro}
\noindent
Starting with Bourgain's work in 1993 \cite{B93a}, the nonlinear Schr\"odinger equation on rectangular tori
have been studied quite intensively. Several authors contributed to today's knowledge about the equation on
this domain, e.g.\ \cite{B93a,HTT11,IP12b,HTT13,B13,W13a,GOW13,S14,D14,BD14,KV14} to list just a few.
We will briefly summarize some important known results of the nonlinear Schr\"odinger equation on $3$-dimensional
tori:
On arbitrary rectangular tori, the author of the present paper proved a trilinear Strichartz estimate that is
sufficient for achieving local well-posedness and global well-posedness for small data in the energy-critical
space in the focusing and defocusing case \cite[Proposition~3.3 and Theorem~1.1]{S14}.
Global well-posedness for the defocusing NLS with arbitrarily large initial data in the energy space $H^1$ have
been proved by Ionescu and Pausader on the standard torus $\T^3=(\R/2\pi\Z)^3$ \cite[Theorem~1.1]{IP12b}.
The strength of their argument is that large parts of the proof hold true even for a general smooth compact
Riemannian $3$-manifold such as for the domain $\mathbb S^3$ in \cite{PTW14}.
Thomann \cite[Theorem~1.4]{T08} proved (for a general analytic manifold) that the Cauchy problem is ill-posed in
$H^1$ for superquintic nonlinearities.
In the present paper, we show that Ionescu and Pausader's arguments in \cite{IP12b} can be extended to any
rectangular torus, where the periods may have irrational ratio. Hence, we obtain global well-posedness
for any initial data in $H^1$. In view of Thomann's result, this will complete the local and global
well-posedness in $H^1$ on this class of manifolds.

\medskip

In the following, we consider a general rectangular torus, i.e.\ given any
$\balpha=(\alpha_1,\alpha_2,\alpha_3)\in(0,\infty)^3$, we define
\[
  \T^3_{\balpha} \coloneqq \R^3/(2\pi\alpha_1\Z \times 2\pi\alpha_2\Z \times 2\pi\alpha_3\Z).
\]
For notational convenience, we will use the standard torus $\T^3\coloneqq\T^3_{(1,1,1)}$ as base space:
By a change of spatial variables, we rewrite the defocusing nonlinear Schr\"odinger equation 
\[
   \left\{
     \begin{array}{rcll}
       i\partial_t v+\sum_{j=1}^3 \frac{\partial^2}{\partial x_j^2}v &=& v|v|^4 \\
       v|_{t=0}                &=& \widetilde \phi \in H^1(\T^3_\balpha),
     \end{array}
   \right.
   \quad (t,x)\in(-T,T)\times\T^3_\balpha
\]
  as
\begin{equation} 
  \label{eq:nls}
   \left\{
     \begin{array}{rcll}
       i\partial_t u+\Delta u &=& u|u|^4 \\
       u|_{t=0}                &=& \phi \in H^1(\T^3),
     \end{array}
   \right.
   \quad (t,x)\in(-T,T)\times\T^3,
\end{equation}
where $\phi(x_1,x_2,x_3) \coloneqq \widetilde \phi(\alpha_1x_1,\alpha_2x_2,\alpha_3x_3)$, $(x_1,x_2,x_3)\in\T^3$.
Here, $\Delta$ is defined via
\[
   \cF(\Delta f)(n) \coloneqq -Q(n) \cF(f)(n),\quad Q(n) \coloneqq \theta_1 n_1^2 + \theta_2 n_2^2 + \theta_3 n_3^2,
\]
for $n=(n_1,n_2,n_3)\in\Z^3$ and $\theta_j\coloneqq \alpha_j^{-2}$.
Using this notation, the free solution to \eqref{eq:nls} is given by
\[
   (e^{it\Delta}\phi)(x) = \sum_{n\in\Z^3} \widehat \phi(n) e^{i(n\cdot x - Q(n)t)}.
\]
We remark that by a change of variable in time, we may assume $\theta_1=1$ without any loss of generality.

For $s\in\R$ we define the Sobolev space $H^s(\T^3)\coloneqq(1-\Delta)^{-\frac{s}{2}}L^2(\T^3)$ endowed with the norm
\[
   \|f\|^2_{H^s(\T^3)} \coloneqq \sum_{\xi\in\Z^3} \langle \xi\rangle^{2s} |\widehat f(\xi)|^2_{L^2(\T^3)},
   \quad\text{where }\langle x\rangle = (1+|x|^2)^{\frac12}.
\]

The energy and the mass,
\begin{equation}\label{eq:conserve}
   \begin{split}
     E\bigl(u(t)\bigr)&= \frac12 \int_{\T^3} |\widetilde\nabla u(t,x)|^2\,dx
                       + \frac{1}{6} \int_{\T^3} |u(t,x)|^6\,dx,\\
     M\bigl(u(t)\bigr)&= \frac12 \int_{\T^3} |u(t,x)|^2\,dx,
   \end{split}
\end{equation}
are conserved in time, whenever $u\colon (-T,T)\times \T^d\to\C$ is a strong solution of \eqref{eq:nls}.
Here $\widetilde \nabla \coloneqq (\alpha_1^{-1}\partial_{x_1},\alpha_2^{-1}\partial_{x_2},\alpha_3^{-1}\partial_{x_3})$.
The scaling-critical space to the corresponding problem in $\R^3$ is $H^1(\R^3)$ and that is why we call
\eqref{eq:nls} $H^1$-critical or, due to the same scaling, energy-critical.

\medskip

The precise formulation of our well-posedness results is a follows:

\begin{theorem}\label{thm:gwp}
  If $\phi\in H^1(\T^3)$, then there exists a unique global solution $u\in X^1(\R)$ of the initial-value problem
  \eqref{eq:nls}. Moreover, the mapping $\phi\mapsto u$ extends to a continuous mapping from $H^1(\T^3)$ to
  $X^1([-T,T])$ for any $T\in[0,\infty)$, and the quantities $M(u)$ and $E(u)$ defined in \eqref{eq:conserve} are
  conserved along the flow.
\end{theorem}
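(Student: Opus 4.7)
The plan is to transplant the Ionescu--Pausader concentration-compactness strategy from \cite{IP12b} to the anisotropic setting, replacing every appearance of the quadrilinear Strichartz-type inequality for the standard torus Laplacian by its rectangular analogue. The author's earlier trilinear estimate \cite[Proposition~3.3]{S14} for the symbol $Q(n)=\theta_1n_1^2+\theta_2n_2^2+\theta_3n_3^2$ is exactly the input needed to make the remainder of the Kenig--Merle induction-on-energy scheme go through.

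First, I would set up the atomic function spaces $X^1$ and $Y^1$ adapted to $e^{it\Delta}$ with the modified symbol $Q$. With the trilinear estimate from \cite{S14} in hand, one derives the key multilinear bound controlling $\|u_1u_2u_3\|_{L^2_{t,x}}$ on small time intervals in terms of $X^1$-norms of dyadic pieces. Standard $U^p$-type contraction arguments then give local well-posedness in $X^1$ for arbitrary $H^1$ data, small-data global well-posedness, and a long-time perturbation/stability lemma for approximate solutions with small error, from which mass and energy conservation follow by approximation by smooth solutions. This portion of the program is essentially routine once the multilinear estimate is in place.

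The heart of the argument is the concentration-compactness reduction. For a bounded sequence $\phi_k\wto 0$ in $H^1(\T^3)$ I would prove a linear profile decomposition in which each profile is either a \emph{scale-one} profile (a fixed function in $H^1(\T^3)$, translated in space and time) or a \emph{Euclidean} profile of the form $N_k^{1/2}\psi\bigl(N_k(x-x_k)\bigr)$ with $N_k\to\infty$ modelled on a fixed $\psi\in\dot H^1(\R^3)$. For each Euclidean profile one constructs a nonlinear profile by solving the energy-critical quintic NLS on $\R^3$ with the anisotropic Laplacian $\sum_j\theta_j\partial_j^2$; after the linear change of variable $y_j=\theta_j^{-1/2}x_j$ this reduces to the standard defocusing quintic NLS on $\R^3$, for which the scattering theorem of Colliander--Keel--Staffilani--Takaoka--Tao applies. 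The rescaled Euclidean solution is then transferred back to the torus and shown to lie in $X^1$ with vanishing error as $N_k\to\infty$ by the stability lemma. Combining this with Pythagorean decompositions of mass, energy and the critical $X^1$-norm one extracts, under the contradiction hypothesis, a minimal-energy critical element: an $H^1$ solution of \eqref{eq:nls} with energy $E_c<\infty$ whose $X^1$-norm is infinite and whose orbit is precompact in $H^1(\T^3)$ modulo spatial translations.

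The main obstacle, and the step requiring the most care, is the rigidity argument ruling out such a critical element. On the standard torus Ionescu and Pausader use a frequency-localized interaction Morawetz inequality together with the precompactness of the orbit to force the solution to be identically zero. Transferring this to the rectangular torus requires rewriting the virial computation with the weighted gradient $\widetilde\nabla$ and the quadratic form $Q$ in place of $|n|^2$; since $Q$ is positive definite the sign structure of the Morawetz potential is preserved, but the bilinear frequency-truncated error terms must be re-estimated using the anisotropic trilinear bound from \cite{S14} and the bilinear refinements it yields. Once these adjustments are carried out, the contradiction, and hence the theorem, follows exactly as in \cite{IP12b}.
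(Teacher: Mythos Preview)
Your outline is right through the construction of the nonlinear Euclidean profiles, but the final paragraph misidentifies what Ionescu and Pausader actually do on $\T^3$ and therefore misidentifies where the new work lies. In \cite{IP12b} there is \emph{no} extraction of a critical element with precompact orbit and \emph{no} interaction Morawetz inequality. The induction on energy is set up on intervals of length shrinking to zero (the function $\Lambda_*(L)$ in the paper): if $\Emax<\infty$ one takes a sequence $u_k$ with $E(u_k)\to\Emax$ and $\|u_k\|_{Z(I_k)}\to\infty$ while $|I_k|\to 0$, applies the profile decomposition to $u_k(0)$, controls each Euclidean profile via the $\R^3$ theory, and controls the scale-one profile $g$ simply by \emph{local} well-posedness, since for fixed $g$ the solution with data $g$ is bounded in $X^1$ on any sufficiently short interval. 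Stability then bounds $\|u_k\|_{Z(I_k)}$, a contradiction. A Kenig--Merle rigidity step is neither present nor needed, and an interaction Morawetz argument on a compact manifold would in any case be problematic.

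Consequently the actual obstacles on $\T^3_{\balpha}$ are not the ones you flag. The two places where the rectangular torus genuinely requires new estimates are: (i) an \emph{extinction lemma} showing that $\|e^{it\Delta}T_N\phi\|_{Z(TN^{-2},T^{-1})}$ is small for large $T$ and $N$, which is what lets the Euclidean nonlinear profile be a good approximation outside the Euclidean window; and (ii) a lemma bounding the interaction term $\|\fD_{4,1}(\omega,e^{it\Delta}P_{>BN}f)\|_{L^1_tH^1_x}$ between a concentrating bump $\omega$ and the high-frequency tail of a scale-one profile (the replacement for \cite[Lemma~7.1]{IP12b}). Both rely on exponential-sum/kernel bounds and a Schur-type lattice count involving the anisotropic form $Q(p)-Q(q)$, and these are the computations that have to be redone for general $\theta_j$. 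Your proposal does not isolate either of these, and the Morawetz step you sketch instead is not part of the argument.
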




\medskip

In order to point out the similarities and differences to \cite{IP12b}, we organize the paper in the same way.
We will fix some notation and state the known Strichartz estimates in Section~\ref{sect:notation}.
In Section~\ref{sect:lpw_stability}, we state the large-data local well-posedness and stability results that,
thanks to \cite[Theorem~1.1]{KV14} and \cite[Proposition~4.1]{S14}, stays the same as in \cite[Section~3]{IP12b}.
Section~\ref{sect:eucl_prof} is destined to study free and nonlinear solutions to \eqref{eq:nls} with
initial-data that concentrate to a point in space and time. To this purpose, we prove a version of
the extinction lemma that applies to rectangular tori other than $\T^3$ as well, which replaces
\cite[Lemma~4.3]{IP12b}.
Statements about the profile decomposition that have already been proved and used in \cite{IP12a,IP12b} are
summarized in Section~\ref{sect:prof_decomp}.
Theorem~\ref{thm:gwp} may be proved by induction on energy along the lines of \cite[Section~6--7]{IP12b},
except of \cite[Lemma~7.1]{IP12b}. A replacement for this lemma is given in Section~\ref{sect:proof_thm}.

We want to point out that the paper is not self-contained but relies heavily on \cite{IPS12,IP12a,IP12b}.

\subsection*{Acknowledgments}
The author acknowledges support from the German Research Foundation,
Collaborative Research Center 701. The author also would like to thank Beno\^{i}t Pausader and Alexandru Ionescu
for helpful discussions.

\section{Preliminaries}\label{sect:notation}
\noindent
Notation and some known results are collected in this section.

We write $A\ls B$ if there is a constant $C>0$ such that $A\leq CB$. If we want to emphasize the dependence of
the constant, then we write $A\ls_s B$ for $A\leq C(s) B$, where the constant $C(s)$ depends on $s$.

For a vector $p\in\N^n$, we denote by
$\fD_{p_1,\ldots,p_n}(a_1,\ldots,a_n)$ a $|p|$-linear expression which is a product of $p_1$ terms
that are either equal to $a_1$ or its complex conjugate $\overline{a_1}$ and similarly for $p_j$,
$a_j$, $2\leq j \leq n$. 

We will use the following convention for the Fourier transform on $\T^3$
\[
  (\cF f)(\xi) = \frac{1}{(2\pi)^{3/2}} \int_{\T^3} f(x)e^{-ix\cdot\xi} \, dx,\quad \xi\in\Z^3
\]
so that we have the Fourier inversion formula
\[
   f(x) = \frac{1}{(2\pi)^{3/2}} \sum_{\xi\in\Z^3} (\cF f)(\xi)e^{ix\cdot\xi},\quad x\in\T^3.
\]

We fix a smooth, non-negative, even function $\eta^1\colon \R\to[0,1]$ with $\eta^1(y)=1$ for $|y|\leq1$
and $\supp \eta^1\subseteq(-2,2)$. Then, let $\eta^3\colon \R^3\to[0,1]$ be defined via
$\eta^3(x)\coloneqq\eta^1(x_1)\eta^1(x_2)\eta^1(x_3)$. For a dyadic number $N\geq 1$, we set
\[
   \eta_N^3(\xi) \coloneqq \eta^3\biggl(\frac{|\xi|}{N}\biggr) - \eta^3\biggl(\frac{2|\xi|}{N}\biggr)
   \quad \text{for }N\geq 2,\quad \eta^3_1(\xi) \coloneqq \eta^3(|\xi|).
\]
We also define the frequency localization operators $P_N\colon L^2(\T^d)\to L^2(\T^d)$ as the Fourier multiplier
with symbol $\eta^3_N$. Moreover, we define $P_{\leq N}\coloneqq\sum_{1\leq M\leq N} P_M$.
More generally, given a set $\mathcal S\subseteq\Z^3$, we define $P_{\mathcal S}$ to be the Fourier multiplier
operator with symbol $\cf_{\mathcal S}$, where $\cf_{\mathcal S}$ denotes the characteristic function of $\mathcal S$.

\subsection{Function spaces}\label{sect:funct_spaces}
We define the same resolution spaces $X^s$ and $Y^s$ that were used in \cite{IP12b}.
These spaces are based on the $U^p$- and $V^p$-spaces, where we refer the reader to
\cite{HHK09,HTT11,HTT13,KTV14} for more details.

\begin{definition}\label{def:x_y}
  Let $s\in\R$.
  \begin{enumerate}
  \item We define $\widetilde X^s$ as the space of all functions $u\colon\R\to H^s(\T^3)$ such that
    $t\mapsto e^{itQ(\xi)}(\cF u(t))(\xi)$ is in $U^2(\R,\C)$ for all $\xi\in\Z^3$, endowed with the norm
\[
   \|u\|_{\widetilde X^s} \coloneqq \biggl( \sum_{\xi\in\Z^3} \la\xi\ra^{2s} \bigl\|e^{itQ(\xi)}\bigl(\cF u(t)\bigr)(\xi)
                          \bigr\|_{U^2}^2 \biggr)^{\frac12}.
\]
  \item We define $\widetilde Y^s$ as the space of all functions $u\colon\R\to H^s(\T^3)$ such that
    $t\mapsto e^{itQ(\xi)}(\cF u(t))(\xi)$ is in $V^2(\R,\C)$ for all  $\xi\in\Z^3$, endowed with the norm
\[
   \|u\|_{\widetilde X^s} \coloneqq \biggl( \sum_{\xi\in\Z^3} \la\xi\ra^{2s} \bigl\|e^{itQ(\xi)}\bigl(\cF u(t)\bigr)(\xi)
                          \bigr\|_{V^2}^2 \biggr)^{\frac12}.
\]
  \item For time intervals $I\subseteq\R$ we define $X^s(I)$ and $Y^s(I)$ to be the corresponding
    restriction spaces:
\begin{align*}
   X^s(I) &\coloneqq \Bigl\{ u\in C(I;H^s) : \|u\|_{X^s(I)}\coloneqq \sup_{\substack{J\subseteq I,\\|J|\leq1}}
    \inf_{v\cdot \cf_J(t) = u\cdot \cf_J(t)} \|v\|_{\widetilde X^s} < \infty \Bigr\},\\
   Y^s(I) &\coloneqq \Bigl\{ u\in C(I;H^s) : \|u\|_{Y^s(I)}\coloneqq \sup_{\substack{J\subseteq I,\\|J|\leq1}}
    \inf_{v\cdot \cf_J(t) = u\cdot \cf_J(t)} \|v\|_{\widetilde Y^s} < \infty \Bigr\}.
\end{align*}
  \end{enumerate}
\end{definition}

Note, that we have the following important properties.
\begin{lemma}\label{lem:lem_xy}
  Let $I\subseteq\R$ be a time interval.
  \begin{enumerate}
  \item\label{it:embed_xyl} We have $X^1(I) \hookrightarrow Y^1(I) \hookrightarrow L^\infty(I,H^1(\T^3))$.
  \item\label{it:lin_sol_x} In addition, assume that $0\in I$.
    Let $s\geq0$, $\phi\in H^s(\T^3)$, and $e^{it\Delta}\phi$ be the linear solution for $t\in I$,
    then $e^{it\Delta}\phi\in X^s(I)$ and
\[
   \|e^{it\Delta}\phi\|_{X^s(I)} \leq \|\phi\|_{H^s(\T^3)}.
\]
  \end{enumerate}
\end{lemma}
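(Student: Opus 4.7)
The plan is to reduce both parts to standard facts about the atomic spaces $U^2$ and $V^2$, applied coordinate-by-coordinate in the Fourier variable $\xi\in\Z^3$. The two ingredients I would invoke are the continuous embeddings $U^2\hookrightarrow V^2\hookrightarrow L^\infty_t$ from the $U^p$/$V^p$ literature cited above (\cite{HHK09,KTV14}) and the fact that a step function $c\,\cf_J$ is a $U^2$-atom of norm at most $|c|$.

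For part (i), the embedding $\widetilde X^1\hookrightarrow\widetilde Y^1$ follows immediately by applying $\|\cdot\|_{V^2}\leq\|\cdot\|_{U^2}$ to $t\mapsto e^{itQ(\xi)}(\cF u(t))(\xi)$ for each fixed $\xi$, squaring, weighting by $\la\xi\ra^2$, and summing. This transfers to $X^1(I)\hookrightarrow Y^1(I)$ because any $\widetilde X^1$-extension of $u$ on a subinterval $J$ is simultaneously a $\widetilde Y^1$-extension, so taking the infimum over extensions and then the supremum over $J$ of length $\leq 1$ preserves the inequality. For $Y^1(I)\hookrightarrow L^\infty(I,H^1(\T^3))$, I would fix $t_0\in I$, choose a subinterval $J\ni t_0$ of length $\leq 1$, and pick an extension $v$ with $\|v\|_{\widetilde Y^1}$ close to $\|u\|_{Y^1(I)}$. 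Since $V^2\hookrightarrow L^\infty_t$, each Fourier mode satisfies $|\cF v(t_0,\xi)|=|e^{it_0Q(\xi)}\cF v(t_0,\xi)|\leq\|e^{itQ(\xi)}\cF v(t,\xi)\|_{V^2}$; squaring, weighting by $\la\xi\ra^2$, and summing yields $\|u(t_0)\|_{H^1(\T^3)}\lesssim\|u\|_{Y^1(I)}$ uniformly in $t_0$.

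For part (ii), the crucial observation is that $\cF(e^{it\Delta}\phi)(t,\xi)=e^{-itQ(\xi)}\widehat\phi(\xi)$, so that $e^{itQ(\xi)}\cF(e^{it\Delta}\phi)(t,\xi)=\widehat\phi(\xi)$ is \emph{constant} in $t$. To compute the norm on a subinterval $J\subseteq I$ of length $\leq 1$, I would use the extension $v(t):=\cf_J(t)\,e^{it\Delta}\phi$; each frequency mode then reduces to the single-step function $\cf_J(t)\widehat\phi(\xi)$, which is a $U^2$-atom of norm at most $|\widehat\phi(\xi)|$. Weighting by $\la\xi\ra^{2s}$ and summing over $\xi\in\Z^3$ gives $\|v\|_{\widetilde X^s}\leq\|\phi\|_{H^s(\T^3)}$, and since this bound is independent of $J$, the supremum in the definition of $\|\cdot\|_{X^s(I)}$ respects it.

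No genuine obstacle is anticipated: both assertions are routine applications of the $U^p$/$V^p$ toolkit, essentially identical in structure to their counterparts in \cite{HTT11,IP12b}; only the bookkeeping around the restriction-space definition (the $\inf$ over extensions and the $\sup$ over subintervals of length $\leq 1$) needs mild attention, and neither the dispersion relation $Q$ nor the irrationality of the ratios $\alpha_j/\alpha_k$ plays any role at this level.
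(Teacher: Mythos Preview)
Your proposal is correct and follows the standard $U^p$/$V^p$ argument; the paper itself does not supply a proof of this lemma but records it as a known property inherited from the cited references \cite{HHK09,HTT11,IP12b,KTV14}, and your sketch is precisely the argument one finds there.
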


Following \cite[formula~(2.3)]{IP12b}, we introduce a critical norm $Z$ that is weaker than $X^1$,
i.e.\ $\|u\|_{Z(I)} \ls \|u\|_{X^1(I)}$, which follows essentially from Proposition~\ref{prop:lin_str_est}.
\begin{definition}\label{def:z_norm}
  Let $p_0=4+\frac{1}{10}$, $p_1=100$ and $I\subseteq\R$ be an interval, then we define the norm
\[
  \|u\|_{Z(I)} \coloneqq \sum_{p\in\{p_0,p_1\}} \sup_{J\subseteq I,\;|J|\leq1}
    \biggl( \sum_{N\geq1} N^{5-\frac{p}{2}} \|P_Nu\|_{L^p(J \times \T^3)}^p \biggr)^{\frac1p}.
\]
\end{definition}

\subsection{Definition of solutions}
\begin{definition}
  Let $I\subseteq\R$ be a time interval, and for $f\in L^1_{\mathrm{loc}}(I,L^2(\T^3))$ let $\cI_{t_0}$ be defined as
\[
   \cI_{t_0}(f)(t) \coloneqq \int_{t_0}^t e^{i(t-s)\Delta} f(s) \,ds
\]
  for $t\in I$ and $\cI_{t_0}(f)(t)\coloneqq0$ otherwise.
  We call $u\in C(I,H^1(\T^3))$ a \emph{strong solution of \eqref{eq:nls}} if $u\in X^1(I)$, and $u$ satisfies
\begin{equation*}
  u(t) = e^{i(t-t_0)\Delta}u(t_0) - i \cI_{t_0}\bigl(u|u|^4\bigr)(t)
\end{equation*}
  for all $t,t_0\in I$.
\end{definition}
 
\subsection{Dispersive estimates}
The first (scale invariant) Strichartz estimates on both $\T^3$ and $\T^3_\balpha$ have been obtained by Bourgain
\cite{B93a,B07}. Recently, Killip and Vi\c{s}an \cite[Theorem~1.1 and formula~(1.8)]{KV14} improved
Bourgain's results, building on an earlier work of Bourgain and Demeter \cite{BD14}.
\begin{prop}[Strichartz estimates, \cite{KV14}]\label{prop:lin_str_est}
  Let $p>\frac{10}{3}$, $I\subseteq\R$ be any interval with $|I|\leq 1$ and $\cC\subset\Z^3$ a cube of size
  $N\geq 1$, then
\[
   \|P_\cC e^{it\Delta} \phi\|_{L^p_{t,x}(I\times\T^3)} \ls N^{\frac32-\frac5p}\|P_\cC\phi\|_{L^2(\T^3)}
\]
  holds true for any $P_\cC\phi\in L^2(\T^3)$. In particular, for any $e^{-it\Delta}P_\cC u\in U^p(I)$ we have
\[
   \|P_\cC u\|_{L^p_{t,x}(I\times\T^3)} \ls N^{\frac32-\frac5p}\|e^{-it\Delta}P_\cC u\|_{U^p(I)}.
\]
\end{prop}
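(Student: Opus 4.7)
The plan is to follow Killip--Vişan and deduce the estimate from the Bourgain--Demeter $\ell^2$-decoupling theorem for the elliptic paraboloid. Since $\theta_1,\theta_2,\theta_3>0$, the surface $\{(\xi,Q(\xi)):\xi\in\R^3\}\subset\R^4$ is a smooth elliptic paraboloid, so decoupling applies with the same critical exponent $p=10/3$ as in the isotropic case, with constants that depend on $\balpha$ but not on $N$.

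I would first reduce to a cube centered at the origin. If $\cC$ has center $\xi_0$, substituting $\xi=\xi_0+\eta$ and using the expansion $Q(\xi_0+\eta)=Q(\xi_0)+2B(\xi_0,\eta)+Q(\eta)$ (where $B$ is the polarization of $Q$) shows that the cross term $e^{-2itB(\xi_0,\eta)}=e^{-i\eta\cdot(2t\theta\xi_0)}$, with $\theta=\diag(\theta_1,\theta_2,\theta_3)$, is absorbed by the translation $x\mapsto x-2t\theta\xi_0$, which preserves the $L^p_{t,x}(I\times\T^3)$-norm. So one may assume $\cC=\{|\xi|_\infty\leq N\}$ is centered at the origin.

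Next I would perform the standard parabolic rescaling $x\mapsto Nx$, $t\mapsto N^2t$, which sends the discrete Fourier support $\Z^3\cap[-N,N]^3$ to points of $N^{-1}\Z^3\cap[-1,1]^3$ on the unit paraboloid $\{\tau=-Q(\xi)\}$; the time interval $I$ of length $\leq1$ becomes one of length $\leq N^2$, and the torus $\T^3$ becomes $(N\T)^3$. A partition-of-unity / Poisson-summation transference then compares the torus $L^p$-norm with the $L^p(\R^4)$-norm of the associated Fourier extension, at the cost of a harmless localization. Partitioning the unit paraboloid into $O(N^{3/2})$ caps of radius $N^{-1/2}$, each of which meets at most $O(1)$ points of $N^{-1}\Z^3$, and applying the Bourgain--Demeter $\ell^2$-decoupling inequality at $p=10/3$, yields the required bound with an $N^\eps$ loss; for $p>10/3$ interpolation with the trivial $L^\infty\times$Plancherel bound, as in Killip--Vişan, removes the $\eps$ and produces exactly $N^{3/2-5/p}\|P_\cC\phi\|_{L^2}$.

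The second display follows from the first by $U^p$-atomic transference: writing $e^{-it\Delta}P_\cC u\in U^p$ as a superposition of atoms $\sum_k\cf_{I_k}\psi_k$ with $\bigl(\sum_k\|\psi_k\|_{L^2}^p\bigr)^{1/p}\leq 1$ and applying the linear estimate on each $I_k\cap I$, the triangle inequality in $L^p_{t,x}$ together with $p\geq 2$ gives the stated bound. The main obstacle is really contained in Bourgain--Demeter's $\ell^2$-decoupling theorem for the paraboloid; once that is granted, the adaptation to the anisotropic $Q$ is only a change of variables, and the uniformity issues across generic $(\theta_1,\theta_2,\theta_3)$ are innocuous because the paper permits implicit constants depending on $\balpha$.
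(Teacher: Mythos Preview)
The paper does not give its own proof of this proposition; it is simply quoted from \cite[Theorem~1.1 and formula~(1.8)]{KV14}, which in turn rests on the Bourgain--Demeter $\ell^2$-decoupling theorem \cite{BD14}. Your sketch is precisely that route (Galilean shift to a centered cube, parabolic rescaling, transference to $\R^4$, decoupling, then $\eps$-removal for $p>\tfrac{10}{3}$), so there is nothing in the present paper to compare against --- you are reproducing the argument of the cited reference rather than diverging from it.

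One small slip in the numerology: after the parabolic rescaling the time interval has length $\sim N^2$, which in the decoupling language corresponds to scale $\delta=N^{-2}$. The caps therefore have base radius $\delta^{1/2}=N^{-1}$, not $N^{-1/2}$, and there are $O(N^3)$ of them, each meeting $O(1)$ points of $N^{-1}\Z^3\cap[-1,1]^3$. With this correction the counting goes through. The $U^p$ atomic transference for the second display is standard and correct as you describe.
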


\section{The local well-posedness and stability theory}\label{sect:lpw_stability}
\noindent
Large-data local well-posedness and stability results are obtained in this section. This allows
to connect neighboring intervals of nonlinear solutions.
The results obtained here extend the existence results for general rectangular tori in \cite{S14}.
This section is almost identical to the standard torus case in \cite[Section~3]{IP12b}, hence we omit the proofs.

The following proposition extends \cite[Proposition~3.3]{IP12b} to arbitrary rectangular tori.
It is easy to see that, thanks to the known linear (Proposition~\ref{prop:lin_str_est}) and trilinear (\cite[Proposition~4.1]{S14}) estimates, the proof is identical to the one for $\T^3$
in \cite[Proposition~3.3]{IP12b}. However, we want to point out that even the linear Strichartz estimate by Bourgain
\cite[Proposition~1.1]{B07} that provides estimates of the $L^p_tL_x^4$-norm with $p>\frac{16}{3}$, suffice
to get the results in this section. That will be pursued in the forthcoming PhD thesis of the author.
\begin{prop}[Local well-posedness]\label{prop:lwp}
  Let $E\geq 1$ and $\rho\in[-1,1]$ be given.
  \begin{enumerate} 
  \item\label{it:lwp} There exists
     $\delta_0=\delta_0(E)$ such that if $\|\phi\|_{H^1(\T^3)}\leq E$ and
\[
   \|e^{i(t-t_0)\Delta}\phi\|_{Z(I)} + \|\cI_{t_0}(e)\|_{X^1(I)} \leq \delta_0
\]
    on some interval $I\ni t_0$, $|I|\leq 1$, then there exists a unique solution $u\in X^1(I)$ of
    the approximate nonlinear Schr\"odinger equation
\begin{equation}\label{eq:lwp_approx_nls}
   i\p_tu + \Delta u = \rho u|u|^4 + e
\end{equation}
  with initial data $u(t_0)=\phi$.
  Besides,
\[
   \|u(t)-e^{i(t-t_0)\Delta}\phi\|_{X^1(I)} \ls_E \|e^{i(t-t_0)\Delta}\phi\|_{Z(I)}^{\frac32} + \|\cI_{t_0}(e)\|_{X^1(I)}.
\]
    If $e=0$ and $\rho=1$, then are the quantities $E(u)$ and $M(u)$, which are defined in \eqref{eq:conserve},
    conserved on $I$.
  \item\label{it:extend} If $u\in X^1(I)$ is a solution to \eqref{eq:nls} on some open interval $I$ and
\[
   \|u\|_{Z(I)} < +\infty,
\]
    then $u$ can be extended as a nonlinear solution to a neighborhood of $\overline I$ and
\[
   \|u\|_{X^1(I)} \leq C\bigl(E(u),\|u\|_{Z(I)}\bigr).
\]
  \end{enumerate}
\end{prop}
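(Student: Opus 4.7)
The plan is to follow the contraction-mapping strategy of \cite[Proposition~3.3]{IP12b} essentially verbatim. The central ingredient is a quintilinear nonlinear estimate of schematic form
\[
  \|\cI_{t_0}(\fD_{1,1,1,1,1}(u_1,\ldots,u_5))\|_{X^1(I)}
  \ls \Bigl(\prod_{j=1}^{5} \|u_j\|_{X^1(I)}\Bigr)^{1-c}\Bigl(\prod_{j=1}^{5} \|u_j\|_{Z(I)}\Bigr)^{c}
\]
for some small $c>0$, derived as in \cite[Lemma~3.2]{IP12b} from the linear Strichartz estimate (Proposition~\ref{prop:lin_str_est}) together with the trilinear Strichartz estimate on $\T^3_\balpha$ of \cite[Proposition~4.1]{S14}. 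Because $\|u\|_{Z(I)}\ls\|u\|_{X^1(I)}$, while only the $Z$-norm is known to be small under the hypothesis, every $Z$-factor present in this estimate converts a bit of that smallness into a prefactor, and this is what will drive the contraction.

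For part~(i), I set up the Duhamel map
\[
  \Phi(u)(t) = e^{i(t-t_0)\Delta}\phi - i\rho\,\cI_{t_0}(u|u|^4)(t) - i\,\cI_{t_0}(e)(t)
\]
on a ball $B_R\subseteq X^1(I)$ centered at $e^{i(t-t_0)\Delta}\phi$ with radius $R=R(E)$. The quintilinear estimate, combined with the hypothesis $\|e^{i(t-t_0)\Delta}\phi\|_{Z(I)}+\|\cI_{t_0}(e)\|_{X^1(I)}\leq\delta_0$, shows that for $\delta_0=\delta_0(E)$ small enough $\Phi$ is a self-map of $B_R$ and a strict contraction; Banach's fixed point theorem produces the unique solution $u\in X^1(I)$. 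Substituting $u$ back into the Duhamel formula and redistributing exponents in the quintilinear estimate yields the stated approximation bound with the $3/2$ power of $\|e^{i(t-t_0)\Delta}\phi\|_{Z(I)}$. Conservation of $M(u)$ and $E(u)$ in the case $e=0$, $\rho=1$ is transferred from the smooth-data setting — where it follows by classical integration by parts — through the continuity of the solution map built into the fixed-point construction.

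For part~(ii), finiteness of $\|u\|_{Z(I)}$ permits a greedy partition of $I$ into $K=K(E(u),\|u\|_{Z(I)})$ consecutive subintervals $I_\ell$ of length $\leq 1$ on each of which $\|e^{i(t-t_\ell)\Delta}u(t_\ell)\|_{Z(I_\ell)}\leq\delta_0(E(u))$. Applying part~(i) on each piece gives a uniform $X^1(I_\ell)$-bound, and summation over $\ell$ yields the claimed control of $\|u\|_{X^1(I)}$. Extension of $u$ to a neighborhood of $\overline I$ then follows from one additional application of part~(i) at the endpoints, where the trace in $H^1$ is well-defined thanks to the embedding $X^1(I)\hookrightarrow C(\overline I,H^1(\T^3))$ of Lemma~\ref{lem:lem_xy}(i).

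The only step where the rectangular geometry genuinely enters is the quintilinear estimate, whose sole non-standard ingredient — the trilinear Strichartz estimate on $\T^3_\balpha$ — is precisely \cite[Proposition~4.1]{S14}; everything else is a verbatim transplant from \cite{IP12b}, which is why the author is justified in omitting the proofs here.
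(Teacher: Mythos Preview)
Your proposal is correct and matches the paper's approach exactly: the paper omits the proof, noting only that it is identical to \cite[Proposition~3.3]{IP12b} once the linear Strichartz estimate (Proposition~\ref{prop:lin_str_est}) and the trilinear estimate \cite[Proposition~4.1]{S14} are in hand. Your sketch simply unpacks that reference, and your observation that the rectangular geometry enters solely through the trilinear estimate is precisely the point the paper makes.
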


Also, the following stability result may be obtain along the same lines as for \cite[Proposition~3.4]{IP12b}.

\begin{prop}[Stability]\label{prop:stab}
  Assume that $I$ is an open bounded interval, $\rho\in[-1,1]$, and $\widetilde u\in X^1(I)$ satisfies the
  approximate Schr\"odinger equation
\[
   i\partial_t \widetilde u + \Delta \widetilde u = \rho \widetilde u |\widetilde u|^4 + e
   \quad\text{on }I\times\T^3.
\]
  Suppose in addition that
\[
   \|\widetilde u\|_{Z(I)} + \|\widetilde u\|_{L^\infty(I,H^1(\T^3))} \leq M,
\]
  for some $M\in[1,\infty)$. Assume that $t_0\in I$ and that $\phi\in H^1(\T^3)$ is such that
  the smallness condition
\[
   \|\phi-\widetilde u(t_0)\|_{H^1(\T^3)} + \|\cI_{t_0}(e)\|_{X^1(I)} \leq \varepsilon
\]
  holds for some $0<\varepsilon<\varepsilon_1$, where $\varepsilon_1\leq 1$ is a small constant
  depending on $M$.
  Then, there exists a strong solution $u\in X^1(I)$ of the Schr\"odinger equation
\[
   i\partial_t u + \Delta u = \rho u|u|^4 \quad\text{on }I\times\T^3
\]
  such that $u(t_0)=\phi$ and
\begin{align*}
   \|u\|_{X^1(I)} + \|\widetilde u\|_{X^1(I)} &\leq C(M),\\
   \|u-\widetilde u\|_{X^1(I)} &\leq C(M)\varepsilon.
\end{align*}
\end{prop}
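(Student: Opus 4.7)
The plan is to adapt the standard critical-space stability argument of \cite[Proposition~3.4]{IP12b}, which rests on Proposition~\ref{prop:lwp} together with the underlying multilinear bound on the Duhamel term in the $X^1$-norm. As noted in the paragraph preceding Proposition~\ref{prop:lwp}, that multilinear estimate is available on any rectangular torus through \cite[Proposition~4.1]{S14} and Proposition~\ref{prop:lin_str_est}, so the same proof should carry over essentially verbatim.

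First, using $\|\widetilde u\|_{Z(I)}\leq M$ together with the $\ell^{p_0}/\ell^{p_1}$-summation structure of the $Z$-norm, I would partition $I$ into $J=J(M)$ consecutive subintervals $I_1,\ldots,I_J$ of length at most $1$, on each of which $\|\widetilde u\|_{Z(I_j)}\leq \delta_0/2$, where $\delta_0=\delta_0(2M)$ is the threshold produced by Proposition~\ref{prop:lwp}. A further finite subdivision ensures $\|\cI_{t_{j-1}}(e)\|_{X^1(I_j)}\leq \delta_0/2$, and then applying Proposition~\ref{prop:lwp} to $\widetilde u$ itself yields $\|\widetilde u\|_{X^1(I_j)}\leq C(M)$ for every $j$.

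Next, on a single $I_j$ with left endpoint $t_{j-1}$, I would write $w\coloneqq u-\widetilde u$ and expand
\[
  u|u|^4-\widetilde u|\widetilde u|^4 = \sum_{\substack{p_1+p_2=5,\\ p_1\geq 1}} c_{p_1,p_2}\,\fD_{p_1,p_2}(w,\widetilde u),
\]
so that $w$ solves $i\p_t w + \Delta w = \rho\sum c_{p_1,p_2}\fD_{p_1,p_2}(w,\widetilde u) - e$ with $w(t_{j-1})=u(t_{j-1})-\widetilde u(t_{j-1})$. Passing to Duhamel and applying the multilinear estimate used in the proof of Proposition~\ref{prop:lwp}, one obtains schematically
\[
  \|w\|_{X^1(I_j)} \leq \varepsilon_j + C(M)\,\delta_0^{4}\,\|w\|_{X^1(I_j)} + C(M)\sum_{p_1=2}^{5}\|w\|_{X^1(I_j)}^{p_1},
\]
where $\varepsilon_j\coloneqq\|w(t_{j-1})\|_{H^1}+\|\cI_{t_{j-1}}(e)\|_{X^1(I_j)}$. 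Choosing $\delta_0$ small in terms of $M$, a standard bootstrap in $\|w\|_{X^1(I_j)}$ starting from $0$ closes in the ball of radius $2\varepsilon_j$, giving $u\in X^1(I_j)$ with $\|w\|_{X^1(I_j)}\leq 2\varepsilon_j$.

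Finally, I would iterate across $j=1,\ldots,J$, using the embedding $X^1\hookrightarrow L^\infty_tH^1$ from Lemma~\ref{lem:lem_xy} to transport $H^1$-information between endpoints. The distances amplify by a bounded factor at each step, so $\|u-\widetilde u\|_{X^1(I)}\leq C(M)^{J+1}\varepsilon$, and the choice $\varepsilon_1\coloneqq C(M)^{-J-2}$ delivers both claims. The main obstacle is only bookkeeping: one must ensure that every summand in the expansion of $u|u|^4-\widetilde u|\widetilde u|^4$ carries at least one factor of $w$, and that in the leading (linear in $w$) term the $Z$-smallness of $\widetilde u$ is correctly extracted, so that the coefficient $C(M)\delta_0^{4}$ above is genuinely small for $\delta_0$ small. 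Since the trilinear estimate \cite[Proposition~4.1]{S14} and Proposition~\ref{prop:lin_str_est} take the same form on $\T^3_\balpha$ as on $\T^3$, no substantively new analysis beyond \cite[Section~3]{IP12b} is required.
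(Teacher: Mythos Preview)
Your proposal is correct and follows exactly the approach the paper indicates: the paper omits the proof entirely, stating that ``the following stability result may be obtain[ed] along the same lines as for \cite[Proposition~3.4]{IP12b}'' once one has the linear Strichartz estimates (Proposition~\ref{prop:lin_str_est}) and the trilinear estimate \cite[Proposition~4.1]{S14} in hand. Your outline (partition into subintervals with small $Z$-norm, difference equation for $w=u-\widetilde u$, multilinear Duhamel bound, bootstrap, and iteration) is precisely the standard argument from \cite[Section~3]{IP12b}, so nothing further is needed.
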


\section{Euclidean profiles}\label{sect:eucl_prof}
\noindent
This section is devoted to prove estimates which compare Euclidean and periodic solutions
of both linear and nonlinear Schr\"odinger equations. This kind of comparison makes sense only for short time and
in the case of rescaled initial data that concentrate at a point (see $T_N$).
In order to obtain the main result of this section (Proposition~\ref{prop:eucl_frame_sol}),
one may argue similarly as in \cite[Section~4]{IP12b} if the extinction lemma \cite[Lemma~4.3]{IP12b} is replaced
by Lemma~\ref{lem:extinct}.

Let $\eta\in C_0^\infty(\R^3)$ be a fixed spherically symmetric function with $\eta(x)=1$ for $|x|\leq 1$ and
$\eta(x)=0$ for $|x|\geq 2$.
\begin{definition}\label{def:trafos}
   For $\phi\in \dot H^1(\R^3)$ and $N\geq 1$, we define
\[
    T_N\phi\in H^1(\T^3), \quad T_N\phi(y) = N^{\frac12} \eta\bigl(N^{\frac12}\Psi^{-1}(y)\bigr)
    \phi\bigl(N \Psi^{-1}(y)\bigr),
\]
  where $\Psi\colon\{x\in\R^3 : |x|_{\infty}<\pi\}\to \T^3$, $\Psi(x)=x$.
\end{definition}
Note that the operator $T_N\colon \dot H^1(\R^3)\to H^1(\T^3)$ is linear with the property that
$\|T_N\phi\|_{H^1(\T^3)} \ls \|\phi\|_{\dot H^1(\R^3)}$.

The next lemma helps to understand the linear and as a consequence (see Proposition~\ref{prop:lwp}~\ref{it:lwp})
nonlinear solution beyond the Euclidean window.
We want to point out that an argumentation like in the following proof works for a general $3$-dimensional
manifold $\T\times M$. The difference to the proof of \cite[Lemma~4.3]{IP12b} is the weaker estimate
\eqref{eq:kernel_abs_est} and the observation that this still suffices.
\begin{lemma}[Extinction lemma]\label{lem:extinct}{\ }
    Let $\phi\in\dot H^1(\R^3)$. For any $\eps>0$, 
    there exists $T=T(\phi,\eps)$ and $N_0=N_0(\phi,\eps)$ such that for all $N\geq N_0$, it holds that
\[
   \|e^{it\Delta} (T_N\phi)\|_{Z(TN^{-2},T^{-1})} \ls \eps.
\]
\end{lemma}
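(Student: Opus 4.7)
The plan is to follow the three-step structure of the proof of \cite[Lemma~4.3]{IP12b} for $\T^3$ — reduce to annulus-supported $\hat\phi$, represent the evolution as convolution with a kernel, then interpolate between a pointwise kernel bound and the trivial $L^2$ estimate — and only replace the kernel bound by one that survives the absence of arithmetic structure on a rectangular torus.

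\emph{Reductions.} By the $\dot H^1$-density of $\hat\phi\in C_0^\infty(\R^3\setminus\{0\})$ and the linear bound $\|T_N\phi\|_{H^1(\T^3)}\ls\|\phi\|_{\dot H^1(\R^3)}$, it suffices to prove the lemma when $\hat\phi$ is supported in a fixed annulus $\{R_0^{-1}\le|\xi|\le R_0\}$. A direct computation of $\widehat{T_N\phi}(n)$, comparing the discrete Fourier coefficient with the Euclidean Fourier transform of the unscaled bump, shows that $\widehat{T_N\phi}(n)$ is concentrated near $|n|\sim R_0N$ with Schwartz-type tails; hence in the dyadic $Z$-sum only scales $M\sim N$ contribute nontrivially for $N\ge N_0$, the rest being absorbed by taking $N_0$ large enough.

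\emph{Kernel bound and interpolation.} Writing
\[
P_M e^{it\Delta}(T_N\phi)(x)=(K_t\ast T_N\phi)(x),\qquad K_t(x)=\sum_{n\in\Z^3}\eta_M^3(n)\,e^{i(n\cdot x-Q(n)t)},
\]
the decisive ingredient is a pointwise bound on $\|K_t\|_{L^\infty_x}$. On $\T^3$ this is Bourgain's rational-time estimate, based on the Jacobi theta identity for $Q(n)=|n|^2$; on a rectangular torus with possibly irrational ratios of periods this arithmetic structure is unavailable. One has to settle instead for a weaker bound of rough form $\|K_t\|_{L^\infty_x}\ls M^3(1+M^2t)^{-\kappa}$ for some $\kappa>0$, obtainable by repeated van der Corput / stationary phase in each coordinate after the normalization $\theta_1=1$. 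Interpolating this $L^\infty$-bound against the trivial $\|T_N\phi\|_{L^2(\T^3)}\ls N^{-1}\|\phi\|_{\dot H^1}$ via Young's inequality and integrating in $t\in(TN^{-2},T^{-1})$, one obtains $\|P_Ne^{it\Delta}(T_N\phi)\|_{L^p_{t,x}}^p\ls N^{p/2-5}\,T^{-\kappa'}$ for some $\kappa'=\kappa'(p,\kappa)>0$ whenever $\kappa>1/(p-2)$, which holds at both $p=p_0$ and $p=p_1$. Multiplying by the $Z$-weight $N^{5-p/2}$, taking the $p$-th root, and summing over the single effective scale $M\sim N$ from Step~1 yields $\|e^{it\Delta}(T_N\phi)\|_{Z(TN^{-2},T^{-1})}\ls C(\phi)\,T^{-\kappa''}$. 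Taking $T=T(\phi,\eps)$ large makes this $\le\eps$, and then $N_0=N_0(\phi,\eps)$ is fixed to absorb the tails from Step~1.

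\emph{Main obstacle.} The heart of the argument is the kernel estimate in Step~2: on $\T^3$, Bourgain's theta identity furnishes sharp $(1+M^2t)^{-3/2}$-decay, whereas for generic period ratios no such identity is available. The critical observation — already flagged in the paragraph preceding the lemma — is that the extinction lemma only requires \emph{some} positive power of $T^{-1}$, not the sharp Euclidean one, so even the weaker oscillatory-integral decay obtainable on a rectangular torus is enough, since the threshold $\kappa>1/(p_0-2)$ is quite mild. All other ingredients (frequency localization and $L^p$-interpolation) carry over essentially verbatim from \cite[Lemma~4.3]{IP12b}.
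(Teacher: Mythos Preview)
The gap is in the kernel estimate. A pointwise bound of the form $\|K_M(t,\cdot)\|_{L^\infty_x}\ls M^3(1+M^2t)^{-\kappa}$ with $\kappa>0$ is \emph{false} on any torus: the 1D sum $\sum_{|n|\le M}e^{i(nx-\theta n^2 t)}$ does not decay monotonically in $t$ but recurs, so at the upper end $t\sim T^{-1}$ (with $M\sim N\gg T$) your bound would assert $|K_M|\ls M^{3-2\kappa}T^{\kappa}\to 0$, which contradicts, e.g., the Gauss-sum size $\sim M^3 q^{-3/2}$ at $t=2\pi a/q$ on the standard torus. Van der Corput's second-derivative test gives only $|K_{M,j}(t)|\ls M(\theta_j t)^{1/2}+(\theta_j t)^{-1/2}$, which is \emph{not} of the shape $(M^2t)^{-\kappa}$; the first term actually grows in $t$. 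What one genuinely needs is a bound that is uniform on the specific window $[SM^{-2},S^{-1}]$ and decays in the parameter $S$, not in $t$.

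The paper obtains exactly this, but by a different and rather cleaner route than you sketch. Since $Q$ is diagonal the kernel factorizes; one bounds the $\xi_2,\xi_3$ factors trivially by $M$ each, and on the remaining factor --- which after the normalization $\theta_1=1$ is the \emph{standard} 1D Schr\"odinger kernel on $\T$ --- applies Bourgain's exponential-sum estimate together with Dirichlet's approximation lemma. This yields $\|K_M\|_{L^\infty([SM^{-2},S^{-1}]\times\T^3)}\ls S^{-1/2}M^3$ (compare $S^{-3/2}M^3$ on $\T^3$), and the point of the lemma is precisely that this weaker $S^{-1/2}$ still suffices for the $Z$-norm computation. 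So the normalization $\theta_1=1$ is used to make the \emph{arithmetic} estimate available in one coordinate, not to feed van der Corput; that is the idea you are missing.
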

\begin{proof}
  We modify the proof of \cite[Lemma~4.3]{IP12b}. For $M\geq 1$ we have that
\[
   \bigl(P_{\leq M} e^{it\Delta}(T_N\phi)\bigr)(t,x) = \frac{1}{(2\pi)^3} \int_{\T^3} K_M(t,x-y) T_N\phi(y)\, dy,
\]
  where $K_M$ is given by
\[
   K_M(t,x) = \sum_{\xi\in\Z^3} e^{i(x\cdot \xi-tQ(\xi))}\eta^3\Bigl(\frac{\xi}{M}\Bigr).
\]
  Bourgain's exponential sum estimate \cite[Lemma~3.18]{B93a} yields
\begin{equation}\label{eq:kernel_abs_est}
   |K_M(t,x)| \ls M^2 \biggl| \sum_{\xi_1\in\Z} e^{i(x_1\xi_1-t\xi_1^2)}\eta^1\Bigl(\frac{\xi_1}{M}\Bigr)^2 \biggr|
              \ls \frac{M^3}{\sqrt q\bigl(1+M\bigl|\frac{t}{2\pi}-\frac{a}{q}\bigr|^{1/2}\bigr)}
\end{equation}
  provided that
\begin{equation*}
  \frac{t}{2\pi}=\frac{a}{q}+\beta, \quad \text{$q\in\{1,\ldots,M\}$, $a\in\Z$, $(a,q)=1$,
    $|\beta|\leq(Mq)^{-1}$.}
\end{equation*}
  Dirichlet's lemma, see e.g.\ \cite[Lemma~3.31]{B93a}, and \eqref{eq:kernel_abs_est} imply for $1\leq S\leq M$,
\begin{equation}\label{eq:kernel_l_infty}
   \|K_M\|_{L^{\infty}([SM^{-2},S^{-1}]\times\T^3)} \ls S^{-\frac12}M^3.
\end{equation}
  Indeed, assume that $|t|\leq\frac1S$, and write $\frac{t}{2\pi}=\frac{a}{q}+\beta$. Since
  $|\beta|\leq\frac1M\leq\frac1S$, it follows that $\bigl|\frac{a}{q}\bigr|\leq\frac2S$. Therefore,
  either $|a|\geq 1$, which implies $q\geq\frac{S}{4}$, or $a=0$, and hence, $q=1$ because of $(a,q)=1$.
  In the first case, \eqref{eq:kernel_l_infty} follows from \eqref{eq:kernel_abs_est}:
\[
   |K_M(t,x)| \ls q^{-\frac12}M^3 \ls S^{-\frac12}M^3.
\]
  In the second case, $|\frac{t}{2\pi}-\frac{a}{q}|^{\frac12}= \frac{1}{\sqrt{2\pi}} |t|^{\frac12}$,
  and, from \eqref{eq:kernel_abs_est}, we obtain for $t\in[SM^{-2},S^{-1}]$ that
\[
   |K_M(t,x)| \ls |t|^{-\frac12}M^2 \ls S^{-\frac12}M^3.
\]

  Similarly as in \cite[Lemma~4.3]{IP12b}, we compute for $1\leq T\leq N$ and $p\in\{p_0,p_1\}$
\[
     \sum_{L\notin[NT^{-1/1000},NT^{1/1000}]} L^{(\frac5p-\frac12)p} \|e^{it\Delta}P_L(T_N\phi)\|^p_{L^p([-1,1]\times\T^3))}
     \ls_\phi T^{-\frac{p}{1000}}
\]
  and
\begin{multline*}
   \sum_{p\in\{p_0,p_1\}}
   \sum_{L\in[NT^{-1/1000},NT^{1/1000}]} L^{(\frac5p-\frac12)p} \|e^{it\Delta}P_L(T_N\phi)\|^p_{L^p([TN^{-2},T^{-1}]\times\T^3))}\\
   \ls_\phi T^{(\frac{5}{p_0}-\frac32)p_0/1000} + T^{-(\frac{5}{p_1}+\frac12)p_1/1000}.
\end{multline*}
  The result follows for $T=T(\eps,\phi)$ sufficiently large since both exponents are negative.
\end{proof}

The next proposition describes nonlinear solutions of the initial-value problem \eqref{eq:nls} with data
that concentrates at a point.

Given $f\in L^2(\T^3)$, $t_0\in\R$, and $x_0\in\T^3$, we define
\[
    (\pi_{x_0}f)(x) \coloneqq f(x-x_0) \quad\text{and}\quad
    (\Pi_{t_0,x_0}f)(x) \coloneqq (\pi_{x_0}e^{-it_0\Delta}f)(x).
\]
\begin{definition}\label{def:ren_eucl_prof}
  We define the set of \emph{renormalized Euclidean frames} as
\begin{multline*}
   \widetilde \cF_E
   \coloneqq \Bigl\{ (N_k,t_k,x_k)_{k\geq 1} : \text{$N_k\geq 1$, $N_k\to+\infty$, $t_k\to0$, $x_k\in\T^3$,}\\
      \text{and either $t_k=0$ for all $k\geq 1$ or $\lim_{k\to\infty} N_k^2|t_k|=+\infty$} \Bigr\}.
\end{multline*}
\end{definition}

\begin{prop}\label{prop:eucl_frame_sol}
  Let $\cO=(N_k,t_k,x_k)_k\in\widetilde\cF_E$ and $\phi\in\dot H^1(\R^3)$.
  \begin{enumerate}
  \item\label{it:eucl_frame_sol}
    There exist $\tau=\tau(\phi)$ and $k_0=k_0(\phi,\cO)$ such that for all $k\geq k_0$
    there is a nonlinear solution $U_k\in X^1(-\tau,\tau)$ of \eqref{eq:nls} with initial data
    $U_k(0)=\Pi_{t_k,x_k}(T_{N_k}\phi)$ and
\[
   \|U_k\|_{X^1(-\tau,\tau)} \ls_{E_{\R^3}(\phi)} 1.
\]
  \item\label{it:eucl_frame_eucl_sol}
    Let $\Delta_{\R^3}^\Theta\coloneqq \sum_{j=1}^3\theta_j\frac{\partial^2}{\partial x_j^2}$.
    Then there exists a Euclidean solution $u\in C(\R,\dot H^1(\R^3))$ of 
\[
   i\p_tu + \Delta_{\R^3}^\Theta u = u|u|^4
\]
    with scattering data $\phi^{\pm\infty}\in\dot H^1(\R^3)$ such that the following holds
    up to a subsequence: For any $\eps>0$ there exists $T_0=T(\phi,\eps)$ such that for all $T\geq T_0$ there
    is $R_0=R_0(\phi,\eps,T)$ such that for all $R\geq R_0$ there is $k_0=k_0(\phi,\eps,T,R)$ with the property
    that for any $k\geq k_0$ it holds that
\[
   \|U_k-\widetilde u_k\|_{X^1(\{|t-t_k|\leq TN_k^{-2}\}\cap\{|t|\leq T^{-1}\})} \leq \eps,
\]
    where
\[
   (\pi_{-x_k}\widetilde u_k)(t,x) = N_k^{\frac12} \eta\Bigl(\frac{N_k\Psi^{-1}(x)}{R}\Bigr)
     u\bigl(N_k^2(t-t_k),N_k\Psi^{-1}(x)\bigr).
\]
    In addition, up to a subsequence,
\[
   \|U_k(t)-\Pi_{t_k-t,x_k}T_{N_k}\phi^{\pm\infty}\|_{X^1(\{\pm(t-t_k)\geq TN_k^{-2}\}\cap\{|t|\leq T^{-1}\})} \leq \eps
\]
    for $k\geq k_0$.
  \end{enumerate}
\end{prop}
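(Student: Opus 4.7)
The plan is to adapt \cite[Proposition~4.2]{IP12b} almost verbatim, with the only nontrivial substitution being the replacement of their extinction lemma by Lemma~\ref{lem:extinct}. First, since $\Delta^\Theta_{\R^3}$ differs from the standard Laplacian on $\R^3$ only by a constant linear change of variables, the known global well-posedness and scattering theorem for the defocusing energy-critical NLS on $\R^3$ yields a global solution $u \in C(\R,\dot H^1(\R^3))$ of $i\p_t u+\Delta^\Theta_{\R^3}u=u|u|^4$ with the desired scattering data $\phi^{\pm\infty}$ and global Strichartz norms bounded in terms of $E_{\R^3}(\phi)$.

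On the Euclidean window $\{|t-t_k|\leq TN_k^{-2}\}$ I let $\widetilde u_k$ be the function given in the statement and verify that it solves an approximate version of \eqref{eq:nls} with error $e_k$ satisfying $\|\cI_{t_k}(e_k)\|_{X^1}\to 0$ as first $R$ and then $k$ go to infinity. The error splits into (a) the discrepancy between $\Delta$ and $\Delta^\Theta_{\R^3}$ transported through $\Psi^{-1}$, which is negligible because the cutoff confines the support of $\widetilde u_k(t,\cdot)$ to $\{|\Psi^{-1}(x-x_k)|\ls R/N_k\}$ where $\Psi$ is the identity; and (b) commutators of $\Delta$ with the cutoff and the truncation of $|u|^4u$, which vanish in the appropriate limits since $u$ concentrates its $\dot H^1$ and $L^{10}_{t,x}$ mass in a fixed bounded region of Euclidean space-time once $R$ is large. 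Applying the stability result (Proposition~\ref{prop:stab}) at time $t=t_k$ with reference solution $\widetilde u_k$ and initial data $\Pi_{t_k,x_k}(T_{N_k}\phi)$ then yields the required nonlinear solution $U_k$ satisfying the first bound in part~\ref{it:eucl_frame_eucl_sol}, together with the $X^1$-bound in part~\ref{it:eucl_frame_sol} coming from the uniform control on $\widetilde u_k$. In the case $t_k=0$ this already produces $U_k$ on an interval of the form $(-TN_k^{-2},TN_k^{-2})$, and Proposition~\ref{prop:lwp}\ref{it:extend} extends it to the $\phi$-dependent interval $(-\tau(\phi),\tau(\phi))$.

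For the exterior regime $t-t_k \geq TN_k^{-2}$ with $|t|\leq T^{-1}$ (the negative-time case being symmetric), set $t^*\coloneqq t_k+TN_k^{-2}$. The scattering $\|u(s)-e^{is\Delta^\Theta_{\R^3}}\phi^{+\infty}\|_{\dot H^1(\R^3)}\to 0$, pulled back through $T_{N_k}$, combined with the previous paragraph yields $\|U_k(t^*)-\Pi_{t_k-t^*,x_k}T_{N_k}\phi^{+\infty}\|_{H^1(\T^3)} \leq \eps$ for $T\geq T_0(\phi,\eps)$ and $k$ large. Since $\Pi_{t_k-t^*,x_k}$ commutes with $e^{it\Delta}$ and preserves the $L^p_{t,x}$ structure defining the $Z$-norm, Lemma~\ref{lem:extinct} applied to $\phi^{+\infty}$ gives
\[
  \|e^{i(t-t^*)\Delta}\Pi_{t_k-t^*,x_k}T_{N_k}\phi^{+\infty}\|_{Z(t^*,T^{-1})} \ls \eps,
\]
whence Proposition~\ref{prop:lwp}\ref{it:lwp} extends $U_k$ on $[t^*,T^{-1}]$ and keeps it within $\eps$ of the linear profile in $X^1$, establishing the second bound in part~\ref{it:eucl_frame_eucl_sol}. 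The case $N_k^2|t_k|\to\infty$ in part~\ref{it:eucl_frame_sol} is handled by the same mechanism with $t_k$ itself playing the role of $t^*$ and Lemma~\ref{lem:extinct} applied directly to $\phi$. The main obstacle is the construction of $\widetilde u_k$ as an approximate solution with error small in the dual of $X^1$, given its delicate $U^2$-based structure; this step is executed exactly as in \cite[Section~4]{IP12b}, and the only place where the anisotropic geometry of $\T^3_\balpha$ enters nontrivially is the extinction step above, where Lemma~\ref{lem:extinct} supplies just enough polynomial decay in $T$ (both exponents in its proof being negative) to close the stability loop.
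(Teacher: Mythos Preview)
Your proposal is correct and follows essentially the same route as the paper: the paper's own proof of this proposition consists of a single sentence deferring to \cite[Proposition~4.4]{IP12b} with two remarks---that the Euclidean global well-posedness of \cite{CKSTT08} extends to $\Delta_{\R^3}^\Theta$ by scaling, and that the extinction step \cite[Lemma~4.3]{IP12b} is replaced by Lemma~\ref{lem:extinct}---and you have simply expanded this into a fuller sketch. One small point: you cite \cite[Proposition~4.2]{IP12b} whereas the relevant result in that paper is numbered Proposition~4.4; also your error term (a) is in fact identically zero, since on the support of the cutoff the periodic operator $\Delta$ (defined here as the Fourier multiplier with symbol $-Q$) coincides exactly with $\Delta_{\R^3}^\Theta$ under $\Psi$.
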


This may be proved along the lines of \cite[Proposition~4.4]{IP12b}. Here, one has to note that the
global well-posedness result in \cite[Theorem~1.1]{CKSTT08} easily extends from 
$\Delta_{\R^3}=\sum_{j=1}^3\frac{\partial^2}{\partial x_j^2}$ to $\Delta_{\R^3}^\Theta$ by scaling.

\section{Profile decomposition}\label{sect:prof_decomp}
\noindent
The essence of this section is that for every given bounded sequence of functions in $H^1(\T^3)$, we can construct
suitable profiles and express the sequence in terms of these profiles. This section is almost identical to
\cite[Section~5]{IP12b}. We sum up the main results.

\begin{definition}[Euclidean frames]\label{def:eucl_frame_prof}{\ }
  \begin{enumerate}
  \item\label{it:def_eucl_frame}
    The set of \emph{Euclidean frames} is defined as
\[
   \cF_E \coloneqq \Bigl\{ (N_k,t_k,x_k)_{k\geq 1} : \text{$N_k\geq 1$, $N_k\to+\infty$, $t_k\to0$, $x_k\in\T^3$} \Bigr\}.
\]
    We say that two frames, $(N_k,t_k,x_k)_k$ and $(N_k',t_k',x_k')_k$, are \emph{orthogonal} if
\[
    \lim_{k\to+\infty} \Bigl( \Bigl|\ln\frac{N_k}{N_k'}\Bigr| + N_k^2|t_k-t_k'| + N_k|x_k-x_k'| \Bigr) = +\infty.
\]
    Two frames that are not orthogonal are called \emph{equivalent}.
  \item\label{it:def_eucl_prof}
     If $\cO=(N_k,t_k,x_k)_k$ is a Euclidean frame and if $\psi\in\dot H^1(\R^3)$, we define the
    \emph{Euclidean profile associated to $(\psi,\cO)$} as the sequence $(\widetilde \psi_{\cO_k})_k$ in $H^1(\T^3)$:
\[
   \widetilde \psi_{\cO_k} \coloneqq \Pi_{t_k,x_k}(T_{N_k}\psi).
\]
  \item A sequence of functions $(f_k)_k\subseteq H^1(\T^3)$ is \emph{absent from a frame $\cO$}, if for
  every profile $(\widetilde\psi_{\cO_k})_k$ associated to $\cO$,
\[
   \la f_k, \widetilde\psi_{\cO_k} \ra_{H^1\times H^1(\T^3)} \to 0,\quad \text{as $k\to+\infty$.}
\]
  \end{enumerate}
\end{definition}

The profile decomposition in the next proposition is the main statement of this section. We omit the proof of
this proposition because it similar to \cite[Proposition~5.5]{IP12a}.

\begin{prop}\label{prop:prof_decomp}
  Let $(f_k)_k$ be a sequence of functions in $H^1(\T^3)$ satisfying
\[
  \limsup_{k\to+\infty} \|f_k\|_{H^1(\T^3)} \ls E
\]
  and, up to a subsequence, $f_k\wto g\in H^1(\T^3)$. Furthermore, let $I_k=(-T_k,T^k)$ be a sequence of
  intervals such that $|I_k|\to0$ as $k\to+\infty$.
  Then, there exist a sequence of pairwise orthogonal Euclidean frames $(\cO^\alpha)_\alpha$ and a
  subsequence of profiles $(\widetilde \psi^\alpha_{\cO^\alpha_k})_k$ associated to $\cO^\alpha$
  such that, after extracting a subsequence, for every $J\geq 0$,
\[
  f_k = g+\sum_{\alpha=1}^J \widetilde \psi^\alpha_{\cO^\alpha_k} + R_k^J,
\]
  where $R^J_k$ is absent from the frames $\cO^\alpha$, $1\leq \alpha\leq J$, and is small in the sense that
\[
   \limsup_{J\to+\infty} \limsup_{k\to+\infty} \sup_{N\geq1,\;t\in I_k,\;x\in\T^3} 
     N^{-\frac12} |(e^{it\Delta}P_NR^J_k)(x)| = 0.
\]
  Besides, we also have the following orthogonality relations (here $L^p=L^p(\T^3)$):
\[
  \begin{gathered}
  \begin{aligned}
     \|f_k\|_{L^2}^2 &= \|g\|_{L^2}^2 + \|R^J_k\|_{L^2}^2 + o_k(1),\\
    \|\nabla f_k\|_{L^2}^2 &= \|\nabla g\|_{L^2}^2 + \sum_{\alpha=1}^J\|\nabla_{\R^3}\psi^\alpha\|_{L^2(\R^3)}^2
        + \|\nabla R^J_k\|_{L^2}^2 + o_k(1), 
  \end{aligned}\\
     \limsup_{J\to+\infty} \limsup_{k\to+\infty} \bigg| \|f_k\|^6_{L^6} - \|g\|^6_{L^6} -
        \sum_{\alpha=1}^J\|\widetilde\psi^\alpha_{\cO^\alpha_k}\|_{L^6}^6 \bigg| = 0,
  \end{gathered}
\]
  where $o_k(1)\to 0$ as $k\to+\infty$, possibly depending on $J$.
\end{prop}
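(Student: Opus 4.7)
The plan is to implement the iterative extraction procedure of \cite[Proposition~5.5]{IP12a}, the only torus-specific ingredient being the use of the Strichartz bound of Proposition~\ref{prop:lin_str_est} (and its implied refined Sobolev estimate) in place of its standard-torus analogue. The key quantity driving the iteration is the obstruction functional
\[
   \Lambda(h_k)\coloneqq \limsup_{k\to+\infty}\sup_{N\geq 1,\,t\in I_k,\,x\in\T^3} N^{-1/2}\bigl|(e^{it\Delta}P_Nh_k)(x)\bigr|,
\]
which controls the largest amount of mass of $h_k$ that can concentrate at a space-time point under the linear flow on the shrinking interval $I_k$.

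First, after extracting a subsequence, we have $f_k\wto g$, and Rellich--Kondrachov gives $f_k-g\to 0$ strongly in $L^p(\T^3)$ for every $p<6$. Setting $R^0_k\coloneqq f_k-g$, the extraction proceeds inductively: if $\Lambda(R^{J-1}_k)\eqqcolon\eta_J>0$, I would pick a subsequence $(N_k^J,t_k^J,x_k^J)_k$ almost realizing this supremum. A short argument forces $N_k^J\to+\infty$, since a bounded $N_k^J$ combined with $R^{J-1}_k\wto 0$ would send the relevant quantity to zero via Bernstein's inequality; the shrinking of $I_k$ gives $t_k^J\to 0$ for free. This produces a Euclidean frame $\cO^J\in\cF_E$, and the profile $\psi^J\in\dot H^1(\R^3)$ is defined as the weak $\dot H^1(\R^3)$-limit, along a further subsequence, of the rescaled-and-translated function $x\mapsto N_k^{-1/2}\bigl(\pi_{-x_k^J}e^{it_k^J\Delta}R^{J-1}_k\bigr)(N_k^{-1}x)$, cut off to a window of size tending to infinity. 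Setting $R^J_k\coloneqq R^{J-1}_k-\widetilde\psi^J_{\cO^J_k}$ closes the induction.

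Next I would establish the pairwise orthogonality of the frames and the Hilbert-space Pythagorean identities simultaneously. By the defining property of the weak limit, $\la R^J_k,\widetilde\psi^J_{\cO^J_k}\ra_{H^1}\to 0$; equivalent frames would have forced two extractions to produce overlapping profiles, contradicting the maximality built into the choice of $\eta_J$ at the earlier stage. Summing the resulting $\dot H^1$ Pythagorean identity in $J$, and using the boundedness of $\|\nabla R^0_k\|_{L^2}$, yields $\sum_J\|\nabla_{\R^3}\psi^J\|_{L^2(\R^3)}^2<\infty$; in particular $\eta_J\to 0$, which is precisely the stated smallness of the remainder. The $L^2$ orthogonality follows from an analogous (though simpler) computation using the boundedness of $\|R^0_k\|_{L^2}$.

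The main obstacle, as usual for critical profile decompositions, is the asymptotic $L^6$ decoupling. The pairwise $L^6$ orthogonality among the profiles themselves reduces to the Euclidean case via change of variables, together with the observation that orthogonal frames produce profiles whose supports become asymptotically disjoint after rescaling. The decoupling of the remainder from the sum of profiles then proceeds through the refined Sobolev-type embedding
\[
   \|h\|_{L^6(\T^3)}\ls \|h\|_{H^1(\T^3)}^{2/3}\Bigl(\sup_{N\geq 1,\,t\in I,\,x\in\T^3} N^{-1/2}|(e^{it\Delta}P_Nh)(x)|\Bigr)^{1/3},\qquad |I|\leq 1,
\]
applied to $R^J_k$ on $I_k$, which gives $\|R^J_k\|_{L^6(\T^3)}^6\to 0$ as first $k\to+\infty$ and then $J\to+\infty$. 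The proof of this embedding, based on a Littlewood--Paley decomposition, an interpolation inequality, and Proposition~\ref{prop:lin_str_est}, is essentially the only place where the geometry of $\T^3_{\balpha}$ enters, and it transfers directly from $\T^3$ to arbitrary $\balpha$ since the Strichartz constants in Proposition~\ref{prop:lin_str_est} are uniform in the periods.
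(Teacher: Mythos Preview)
Your proposal is correct and matches the paper's approach: the paper omits the proof entirely, noting only that it is similar to \cite[Proposition~5.5]{IP12a}, and your sketch is precisely an outline of that argument with the rectangular-torus Strichartz estimate (Proposition~\ref{prop:lin_str_est}) substituted for its standard-torus counterpart. You have in fact supplied more detail than the paper itself.
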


\section{Proof of the main theorem}\label{sect:proof_thm}
\noindent
In order to prove Theorem~\ref{thm:gwp}, we see from Proposition~\ref{prop:lwp}~\ref{it:extend} that it suffices
to show that solutions remain bounded in $Z$ on intervals of length at most $1$.
To prove this, we induct on the energy $E(u)$ similarly as in \cite[Section~6]{IP12b} and
\cite[Section~5]{PTW14}.

We define
\[
   \Lambda_*(L) \coloneqq \limsup_{\tau\to 0} \sup \bigl\{ \|u\|^2_{Z(I)} : E(u)\leq L,\; |I|\leq \tau \bigr\},
\]
where the supremum is taken over all strong solutions $u$ of \eqref{eq:nls} with $E(u)\leq L$ and all intervals
$I$ of length $|I|\leq \tau$.
We also define the maximal energy such that $\Lambda_*(L)$ is finite:
\[
  \Emax \coloneqq \sup\{L\in\R_+ : \Lambda_*(L)<+\infty\}.
\]
Hence, Theorem~\ref{thm:gwp} is equivalent to the following theorem.
\begin{theorem}\label{thm:emax}
  We have that $\Emax=+\infty$. In particular, every solution of \eqref{eq:nls} is global.
\end{theorem}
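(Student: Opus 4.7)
The approach is to argue by contradiction following the induction-on-energy scheme of \cite[Section~6--7]{IP12b}. Suppose $\Emax < \infty$. Then by the definition of $\Lambda_*$ and of $\Emax$, there exist sequences of strong solutions $u_k\in X^1(I_k)$ of \eqref{eq:nls}, intervals $I_k$ with $|I_k|\to 0$, and energies $E(u_k)\to \Emax$, such that $\|u_k\|_{Z(I_k)}\to +\infty$. After a time translation we may assume $0\in I_k$, so the initial data $\phi_k \coloneqq u_k(0)$ are bounded in $H^1(\T^3)$ by energy conservation. Extracting subsequences, one obtains a weak limit $g$ and a profile decomposition
\[
    \phi_k = g + \sum_{\alpha=1}^J \widetilde\psi^\alpha_{\cO^\alpha_k} + R^J_k
\]
provided by Proposition~\ref{prop:prof_decomp}.

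The first step is to use the energy orthogonality of Proposition~\ref{prop:prof_decomp} together with $E(u_k)\to \Emax$ to show that at most one of the components $g$, $\widetilde\psi^\alpha_{\cO^\alpha_k}$ can carry non-trivial energy in the limit. If two of them had strictly positive limiting energy, each would have energy strictly less than $\Emax$ and therefore a nonlinear flow bounded in $X^1$: either by the short-time local well-posedness of Proposition~\ref{prop:lwp}~\ref{it:lwp} applied to $g$, or by Proposition~\ref{prop:eucl_frame_sol}~\ref{it:eucl_frame_sol} applied to each Euclidean profile. I would then assemble these nonlinear flows together with the linear evolution of the remainder $R^J_k$ into an approximate solution of \eqref{eq:nls} on $I_k$; the pairwise orthogonality of the frames kills all cross terms in the nonlinearity, while the vanishing dispersive bound on $e^{it\Delta}P_NR^J_k$ furnished by Proposition~\ref{prop:prof_decomp}, combined with Proposition~\ref{prop:lin_str_est}, makes the error admissible. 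Proposition~\ref{prop:stab} then produces a uniform $X^1(I_k)$-bound for $u_k$, contradicting $\|u_k\|_{Z(I_k)}\to +\infty$.

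This reduces the analysis to two extremal scenarios: either $\phi_k\to g$ strongly in $H^1$ with $E(g)=\Emax$, or a single Euclidean profile $\widetilde\psi_{\cO_k}$ carries the full energy. The first scenario is excluded by applying Proposition~\ref{prop:lwp}~\ref{it:lwp} at $g$, whose nonlinear evolution has a uniform $Z$-bound on a short interval independent of $k$, in contradiction with $\|u_k\|_{Z(I_k)}\to +\infty$ and $|I_k|\to 0$. For the second scenario one may, after absorbing time translations into the profile, reduce to $\cO\in\widetilde\cF_E$. Then Proposition~\ref{prop:eucl_frame_sol}~\ref{it:eucl_frame_eucl_sol} compares $U_k$ on the inner window $\{|t-t_k|\le TN_k^{-2}\}\cap\{|t|\le T^{-1}\}$ with a rescaled Euclidean solution of $i\partial_t u+\Delta_{\R^3}^\Theta u=u|u|^4$, whose $\dot H^1$-critical scattering is known, and on the outer window with the linear evolution of the scattering data $T_{N_k}\phi^{\pm\infty}$, whose $Z$-norm is made smaller than $\eps$ by Lemma~\ref{lem:extinct}. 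Pasting these two regions via Proposition~\ref{prop:stab} yields $\|u_k\|_{Z(I_k)}\lesssim 1$, the desired contradiction.

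The main obstacle — and the reason this section does not merely quote \cite[Section~6--7]{IP12b} — is precisely the outer-window step that in the standard-torus case is handled by \cite[Lemma~7.1]{IP12b}. On $\T^3_\balpha$ with irrational ratios $\theta_j$ the joint exponential-sum control over $\Z^3$ that underpins that lemma is no longer available, as is already visible in \eqref{eq:kernel_abs_est}, where the Weyl-type factorisation survives only in the single variable $\xi_1$. The replacement to be given in this section has to rely on one-dimensional Bourgain/Dirichlet control applied in the distinguished coordinate, combined with the $p>\frac{10}{3}$ Strichartz gain of Proposition~\ref{prop:lin_str_est} to absorb the loss of two factors of $M$; this is the same strategy that succeeded for Lemma~\ref{lem:extinct}, pushed from the linear to the nonlinear level via Proposition~\ref{prop:stab}. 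Once this replacement is in place, the rest of the induction closes verbatim as in \cite{IP12b} and gives $\Emax=+\infty$, establishing Theorem~\ref{thm:emax} and hence Theorem~\ref{thm:gwp}.
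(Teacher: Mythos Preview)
Your overall contradiction scheme---assume $\Emax<\infty$, extract a critical sequence, apply Proposition~\ref{prop:prof_decomp}, reduce to a single profile, and rule out both the scale-one and the Euclidean scenario---is exactly the route of \cite[Section~6]{IP12b} and is correct. The misstep is in locating where \cite[Lemma~7.1]{IP12b} enters and what its replacement must accomplish.

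That lemma is \emph{not} the outer-window estimate; the outer window is already handled by Lemma~\ref{lem:extinct}, which feeds into Proposition~\ref{prop:eucl_frame_sol} and thus into the single-Euclidean-profile scenario you describe. Rather, \cite[Lemma~7.1]{IP12b} is the tool that makes rigorous your sentence ``the pairwise orthogonality of the frames kills all cross terms in the nonlinearity.'' When one assembles the approximate solution $U^J_{\mathrm{app},k}$ from the nonlinear evolutions of $g$ and of the Euclidean profiles together with $e^{it\Delta}R^J_k$, the quintic error contains terms of the type $\fD_{4,1}(\omega, e^{it\Delta}P_{>BN}f)$ with $\omega$ a rescaled Euclidean solution concentrated at scale $N_k^{-1}$ and $f$ a high-frequency piece of another profile or of the remainder. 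Frame orthogonality alone does not make these small in $L^1_tH^1_x$; one needs precisely the decay $(B^{-\delta}+N^{-\delta})\|f\|_{H^1}$ of \cite[Lemma~7.1]{IP12b}, and it is this interaction estimate whose original proof uses the three-dimensional Weyl-sum bound unavailable on irrational tori.

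The paper's replacement, Lemma~\ref{lem:small_e_aux1}, does not proceed via one-dimensional Bourgain--Dirichlet control combined with Strichartz as you propose. Instead it computes the Fourier coefficients $c_{p,q}=C\,(\cF_{t,x}W)(Q(p)-Q(q),p-q)$ of the operator $K=P_{>BN}\int e^{-it\Delta}W(t,\cdot)P_{>BN}e^{it\Delta}\,dt$, uses the smoothness and localisation of $W=|\omega|^8$ to obtain decay in both $|Q(p)-Q(q)|/N^2$ and $|p-q|/N$, and then bounds $\|K\|_{L^2\to L^2}$ by Schur's test. The only torus-dependent step is a lattice-point count for $\{v:|v|\ll N,\ |p\cdot\Theta v|\ll N^2\}$, which is insensitive to rationality. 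Your proposed route via Proposition~\ref{prop:lin_str_est} would have to produce an $L^1_tH^1_x$ bound on a quintic expression, and it is not clear how the $L^p$, $p>\tfrac{10}{3}$, gain alone closes that; in any case it is a different mechanism from the one the paper actually uses.
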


The proof is the same as in \cite[Theorem~6.1]{IP12b} except of replacing \cite[Lemma~7.1]{IP12b} by

\begin{lemma}\label{lem:small_e_aux1}
  Assume that $B,N\geq 2$ are dyadic numbers, and assume that $\omega\colon (-1,1)\times\T^3\to\C$ is
  a function satisfying
\[
   |\nabla^j \omega|\leq N^{j+\frac12} \cf_{\{|x|\leq N^{-1},\;|t|\leq N^{-2}\}},\quad j=0,1.
\]
  Then, for any $f\in H^1(\T^3)$,
\[
  \|\fD_{4,1}(\omega,e^{it\Delta} P_{>BN}f)\|_{L^1((-1,1),H^1(\T^3))} \ls (B^{-\frac{1}{200}}+N^{-\frac{1}{200}})\|f\|_{H^1(\T^3)}.
\]
\end{lemma}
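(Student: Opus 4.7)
The plan is a dyadic reduction: write $e^{it\Delta}P_{>BN}f = \sum_{M\geq BN}v_M$ with $v_M := e^{it\Delta}P_M f$ over dyadic $M$, control each block in $L^1((-1,1),H^1(\T^3))$ by a bound summable in $M$, and combine via Cauchy--Schwarz in $M$. Distributing the gradient from the $H^1$-norm via Leibniz, each block yields ``Type~A'' terms $\omega^3\nabla\omega\cdot v_M$ (a derivative lands on one copy of $\omega$; four such terms by symmetry) and a ``Type~B'' term $\omega^4\nabla v_M$ (the derivative lands on $v_M$), plus the corresponding plain product at the $L^2$-level, all possibly with conjugates.

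For Type~A, apply H\"older
\[
  \|\omega^3\nabla\omega\cdot v_M\|_{L^1_tL^2_x}
  \ls \|\omega^3\nabla\omega\|_{L^a_tL^b_x}\,\|v_M\|_{L^{p_0}_{t,x}},
\]
with $1/a = 1-1/p_0$ and $1/b = 1/2-1/p_0$. The pointwise bound $|\omega^3\nabla\omega|\le N^3$ on the space-time set of volume $N^{-5}$ yields $\|\omega^3\nabla\omega\|_{L^a_tL^b_x}\ls N^{-1/2+5/p_0}$, and the Killip--Vi\c{s}an Strichartz estimate (Proposition~\ref{prop:lin_str_est}) gives $\|v_M\|_{L^{p_0}_{t,x}}\ls M^{3/2-5/p_0}\|P_Mf\|_{L^2}$. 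Combining and converting $\|P_Mf\|_{L^2}$ to $\|P_Mf\|_{H^1}$ produces $(M/N)^{-(5/p_0-1/2)}\|P_Mf\|_{H^1}$ per block, with exponent $5/p_0-1/2\approx 0.72$; Cauchy--Schwarz in dyadic $M\ge BN$ sums this to $\ls B^{-(5/p_0-1/2)}\|f\|_{H^1}$, which is much stronger than the target $B^{-1/200}$.

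Type~B is the main obstacle: the analogous naive H\"older--Strichartz bound is $(M/N)^{3/2-5/p}\|P_Mf\|_{H^1}$ for any admissible Strichartz exponent $p>10/3$, which \emph{grows} in $M/N$, while the trivial bound $\|P_Mf\|_{H^1}$ (via $\|\omega^4\|_{L^1_tL^\infty_x}\ls 1$) is not summable. The plan is to further decompose $P_Mf=\sum_{\cC}P_\cC f$ into cubes $\cC\subset\{|\xi|\sim M\}$ of side length $\sim N$ and invoke the cube-version of Proposition~\ref{prop:lin_str_est}, which gives the smaller Strichartz constant $N^{3/2-5/p}$ per cube. Since $\widehat{\omega^4}$ is essentially supported on the scale-$N$ ball, the products $\omega^4\nabla P_\cC v_M$ for different cubes are approximately $L^2$-orthogonal in spatial Fourier support, reducing the task to a square-sum over cubes. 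Combining this cube-orthogonality with H\"older--Strichartz at both $p_0$ and $p_1=100$ and interpolating should yield $\ls (B^{-1/200}+N^{-1/200})\|P_Mf\|_{H^1}$ per block, where the exponent $1/200=1/(2p_1)$ reflects precisely the role of $p_1$ in the definition of the $Z$-norm. This cube-decomposition argument replaces the arithmetic-sum arguments of \cite[Lemma~7.1]{IP12b}, which exploit the standard-torus lattice structure and do not transfer to generic rectangular tori with possibly irrational ratios of periods; the delicate balancing of cube size, Strichartz exponent, and orthogonality to extract the tiny power $1/200$ on both $B$ and $N$ is where the real work lies.
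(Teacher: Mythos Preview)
Your Type~A argument is fine, but the Type~B plan has a genuine gap. Run the cube-decomposition computation: applying $L^2$-almost-orthogonality at each fixed time and then Cauchy--Schwarz over the time interval $|t|\le N^{-2}$ gives
\[
  \|\omega^4\nabla v_M\|_{L^1_tL^2_x}
  \ls N^{-1}\Bigl(\sum_{\cC}\|\omega^4\,e^{it\Delta}\nabla P_\cC f\|_{L^2_{t,x}}^2\Bigr)^{1/2},
\]
and for each cube H\"older with $1/a+1/p=1/2$ yields
$\|\omega^4\|_{L^a_{t,x}}\,\|e^{it\Delta}\nabla P_\cC f\|_{L^p_{t,x}}\ls N^{-1/2+5/p}\cdot M\,N^{3/2-5/p}\|P_\cC f\|_{L^2}=MN\|P_\cC f\|_{L^2}$.
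The powers of $N$ cancel \emph{independently of $p$}, so after the $\ell^2$-sum you recover exactly the trivial bound $\|P_Mf\|_{H^1}$ for every choice of Strichartz exponent. There is nothing to interpolate between: both $p_0$ and $p_1$ give the same endpoint, and no combination produces the needed decay $(M/N)^{-\delta}$. The missing ingredient is the \emph{temporal} oscillation: $e^{it\Delta}P_\cC f$ oscillates in $t$ at frequency $Q(\xi)\sim M^2$, while $\omega^4$ lives at time-scale $N^{-2}$; this mismatch is what forces smallness when $M\gg N$, and it is invisible to a pure H\"older--Strichartz argument, which only sees the size and spatial support of $\omega^4$.

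Your diagnosis of the paper's method is also off. The paper does \emph{not} abandon the lattice-sum argument of \cite[Lemma~7.1]{IP12b}; it adapts it. One passes via $TT^*$ to the operator $K(f)=P_{>BN}\int_\R e^{-it\Delta}\bigl(W(t,\cdot)P_{>BN}e^{it\Delta}f\bigr)\,dt$, computes its matrix coefficients
\[
  |c_{p,q}|\ls N^{-1}\Bigl(1+\tfrac{|Q(p)-Q(q)|}{N^2}\Bigr)^{-10}\Bigl(1+\tfrac{|p-q|}{N}\Bigr)^{-10}\cf_{\{|p|,|q|\ge BN\}},
\]
and applies Schur's test. The factor $(1+|Q(p)-Q(q)|/N^2)^{-10}$ is precisely the time-Fourier decay that your approach misses; since $Q(p)-Q(p+v)=-2p\cdot\Theta v-Q(v)$ with $|p|\ge BN$, most $|v|\ls N$ give $|p\cdot\Theta v|\gg N^2$, and an elementary splitting (large $|v|$; large $|p\cdot\Theta v|$; thin slab $|\widehat p\cdot\Theta v|$ small) yields the required $N^3(B^{-1/100}+N^{-1/100})$. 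This is purely geometric (counting lattice points in a slab), uses no arithmetic of the periods, and works for any positive definite diagonal $\Theta$.
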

\begin{proof}
  As one may see in the beginning of the proof of \cite[Lemma~7.1]{IP12b}, the desired result follows from 
\begin{equation}\label{eq:kernel_bd}
   \|K\|_{L^2(\T^3)\to L^2(\T^3)} \ls N^2 \bigl(B^{-\frac{1}{100}}+N^{-\frac{1}{100}}\bigr),
\end{equation}
  where $K\colon L^2(\T^3)\to L^2(\T^3)$,
\[
   K(f)(x) \coloneqq P_{>BN} \int_\R e^{-it\Delta} W(t,x)  P_{>BN} e^{it\Delta} f(x) \, dt.
\]
  For the purpose of proving \eqref{eq:kernel_bd}, we calculate the Fourier coefficients of $K$: Let $p,q\in\Z^3$,
  then
\begin{align*}
  c_{p,q} &= \la e^{ip\cdot x}, K(e^{iq\cdot x})(x) \ra_{L^2_x\times L^2_x(\T^3)}\\
         &= \int_{\R} \bigl\la P_{>BN} e^{it\Delta} e^{ip\cdot x}, W(t,x) P_{>BN} e^{it\Delta} e^{iq\cdot x}
                     \bigr\ra_{L^2_x\times L^2_x(\T^3)} \,dt\\
         &= C(1-\eta^3)\Bigl(\frac{p}{BN}\Bigr)  (1-\eta^3)\Bigl(\frac{q}{BN}\Bigr)
            (\cF_{t,x}W)\bigl(Q(p)-Q(q),p-q\bigr).
\end{align*}
  From the definition of $W$ and scaling in $t$ and $x$, we may get the estimate
\begin{equation}\label{eq:cpq_bd}
  |c_{p,q}| \ls N^{-1} \biggl(1+\frac{|Q(p)-Q(q)|}{N^2}\biggr)^{-10} \biggl(1+\frac{|p-q|}{N}\biggr)^{-10}
     \cf_{\{|p|\geq BN\}}\cf_{\{|q|\geq BN\}}.
\end{equation}
  Using Schur's lemma, we see that
\[
  \|K\|_{L^2(\T^3)\to L^2(\T^3)} \ls \sup_{p\in\Z^3}\sum_{q\in\Z^3}|c_{p,q}| + \sup_{q\in\Z^3}\sum_{p\in\Z^3}|c_{p,q}|.
\]
  In view of \eqref{eq:cpq_bd}, it suffices to prove
\begin{multline}\label{eq:kernel_bd_red}
    \sup_{|p|\geq BN} \sum_{v\in\Z^3}\biggl(1+\frac{|Q(p)-Q(p+v)|}{N^2}\biggr)^{-10}
    \biggl(1+\frac{|v|}{N}\biggr)^{-10}\\
    \ls N^3 \bigl(B^{-\frac{1}{100}} + N^{-\frac{1}{100}}\bigr)
\end{multline}
  to obtain \eqref{eq:kernel_bd}.

  Let $\theta_{\mathrm{max}}\coloneqq\max\{1,\theta_2,\theta_3\}$ and $\Theta\coloneqq\diag(1,\theta_2,\theta_3)$, then we split
  the sum over $v\in\Z^3$ into three parts:
\[
   \sum_{|v|\geq N\min(N,B)^{1/100}}
   + \sum_{\substack{|v|< N\min(N,B)^{1/100},\\|p\cdot \Theta v| \geq \theta_{\mathrm{max}} N^2\min(N,B)^{1/10}}} 
   + \sum_{\substack{|v|< N\min(N,B)^{1/100},\\|p\cdot \Theta v| < \theta_{\mathrm{max}} N^2\min(N,B)^{1/10}}}.
\]
  Thus, it suffices the show \eqref{eq:kernel_bd_red}, where we replace the sum by any sum above.
  We will call these terms $S_1$, $S_2$, and $S_3$.
  One easily verifies that $S_1\ls N^3\min(N,B)^{-1/100}$ because
\[
   S_1 \leq \sum_{|v|\geq N\min(N,B)^{\frac{1}{100}}} \biggl(1+\frac{|v|}{N}\biggr)^{-10}
   \ls  N^{10} \bigl(N\min(N,B)^{\frac{1}{100}}\bigr)^{-7}.
\]
  In order to treat $S_2$, we observe that
\[
  Q(v)\leq\theta_{\mathrm{max}}|v|^2< \theta_{\mathrm{max}} N^2\min(N,B)^{\frac{1}{50}}
  < \theta_{\mathrm{max}} N^2\min(N,B)^{\frac{1}{10}} ,
\]
  and thus,
\[
   \biggl(1+\frac{|Q(p)-Q(p+v)|}{N^2}\biggr)^{-1}
   \leq \frac{N^2}{2|p\cdot \Theta v| - Q(v)}
   \leq \frac{N^2}{|p\cdot \Theta v|}.
\]
   We may bound $S_2$ by
\begin{align*}
   N^{20} \sum_{\substack{|v|< N\min(N,B)^{1/100}\\|p\cdot \Theta v|\geq \theta_{\mathrm{max}} N^2\min(N,B)^{1/10}}}
           |p\cdot \Theta v|^{-10}
   &\leq \min(N,B)^{-1} \sum_{|v|< N\min(N,B)^{1/100}} 1 \\
   &\ls N^3 \min(N,B)^{-\frac{1}{100}}.
\end{align*}
   Finally, it remains to bound $S_3$. To that purpose, we set $\widehat p=\frac{p}{|p|}$.
   Since $|p|\geq BN$, it suffices to prove that
\begin{multline*}
   \#\bigl\{v\in\Z^3 : |v|< N\min(N,B)^{\frac{1}{100}},\;
        |\widehat p\cdot \Theta v|< \theta_{\mathrm{max}} N\min(N,B)^{-\frac{9}{10}}\bigr\}\\
   \ls N^3\min(N,B)^{-\frac{1}{100}}.
\end{multline*}
   This point-set is covered by a rectangle in $\R^3$ with two sides of length $N\min(N,B)^{\frac{1}{100}}$ and 
   one side of length $\ls_{\Theta} N\min(N,B)^{-\frac{9}{10}}$. Therefore, the point-set is bounded by
\[
    (N\min(N,B)^{\frac{1}{100}})^2 N\min(N,B)^{-\frac{9}{10}} \ls N^3\min(N,B)^{-\frac{1}{100}},
\]
  which proves \eqref{eq:kernel_bd_red}.
\end{proof}

\bibliographystyle{amsplain}

\providecommand{\twelve}[1]{2012}
\providecommand{\bysame}{\leavevmode\hbox to3em{\hrulefill}\thinspace}
\providecommand{\MR}{\relax\ifhmode\unskip\space\fi MR }
\providecommand{\MRhref}[2]{%
  \href{http://www.ams.org/mathscinet-getitem?mr=#1}{#2}
}
\providecommand{\href}[2]{#2}

\end{document}